\newtheorem{theorem}{Theorem}[section]
\newtheorem{corollary}[theorem]{Corollary}
\newcommand{\fn}[1]{\mathrm{#1}}
\newcommand{\mdl}[1]{\mathcal{#1}}
\newcommand{\ph}{\varphi}
\newcommand{\NN}{\mathbb{N}}
\newcommand{\QQ}{\mathbb{Q}}
\newcommand{\ZZ}{\mathbb{Z}}
\newcommand{\RR}{\mathbb{R}}
\newcommand{\nn}{n}
\newcommand{\st}{\; | \;}
\newcommand{\mymod}{\mathbin{\mathrm{mod}}}
\newcommand{\len}{\fn{length}}
\title{Inverting the Furstenberg correspondence}
\author{Jeremy Avigad}
\address{Departments of Philosophy and Mathematical Sciences\\
Carnegie Mellon University\\
Pittsburgh, PA 15213}
\thanks{Work partially supported by NSF grant DMS-1068829.}
\subjclass[2010]{37A45, 03F60}
\begin{document}

\begin{abstract}
Given a sequence of sets $A_n \subseteq \{0,\ldots,n-1\}$, the Furstenberg correspondence principle provides a shift-invariant measure on $2^\NN$ that encodes combinatorial information about infinitely many of the $A_n$'s. Here it is shown that this process can be inverted, so that for any such measure, ergodic or not, there are finite sets whose combinatorial properties approximate it arbitarily well. The finite approximations are obtained from the measure by an explicit construction, with an explicit upper bound on how large $n$ has to be to yield a sufficiently good approximation. 

We draw conclusions for computable measure theory, and show, in particular, that given any computable shift-invariant measure on $2^\NN$, there is a computable element of $2^\NN$ that is generic for the measure. We also consider a generalization of the correspondence principle to countable discrete amenable groups, and once again provide an effective inverse.
\end{abstract}

\maketitle

\section{Introduction}

For each $n$, let $A_n$ be a subset of $\{0, \ldots, n-1\}$. The Furstenberg correspondence principle, described more precisely in Section~\ref{preliminaries:section}, allows one to assign a shift-invariant measure on Cantor space, $2^\NN$, which encodes combinatorial information about infinitely many of the $A_n$'s. This correspondence lies at the heart of Furstenberg's remarkable ergodic-theoretic proof \cite{furstenberg:77,furstenberg:81} of Szemer\'edi's theorem, and allows one to use facts about shift-invariant measures on Cantor space to draw conclusions about subsets of $\{0,\ldots,n-1\}$, for sufficiently large $n$.

It is natural to ask exactly which shift-invariant measures on $2^\NN$ arise from this correspondence. It is not hard to show that any \emph{ergodic} measure can be obtained in this way; any generic point of the system reflects all the relevant information about the measure, and the desired finite approximations can be read off from such a point. This fact has been noted by a number of authors, including Bergelson, Furstenberg, and Weiss \cite[Section 2]{bergelson:et:al:06}, Bergelson, Leibman, and Lesigne \cite[Section 0.B]{bergelson:et:al:07}, Tao \cite[Section 2]{tao:08} and Host and Kra \cite[Proposition 6.1]{host:kra:09}.

Theorem~\ref{inverse:thm} of Section~\ref{inverting:section} shows that, in fact, \emph{every} shift-invariant measure on $2^\NN$, ergodic or not, arises in such a way. The proof provides an explicit construction of a sequence of finite combinatorial approximations to any given measure, with, moreover, a uniform upper bound on how large $n$ has to be in order to approximate the measure to a given accuracy. 

Section~\ref{computable:section} considers some consequences for computable measure theory. There is a precise sense, described in \cite{galatolo:et:al:unp:b,galatolo:et:al:unp,hoyrup:rojas:09b,weihrauch:99}, in which a dynamical system can be said to be \emph{computable}; and similarly for a transformation of such a space, and an element of the underlying space. In particular, $2^\NN$ equipped with the left shift is computable, a computable element of $2^\NN$ is a computable binary sequence, and a computable measure on $2^\NN$ is an algorithm which computes (arbitrarily good rational approximations to) the measure of each basis set in the usual topology.

It is by now well known that many common ergodic-theoretic constructions are not computable. For example, V'yugin \cite{vyugin:97,vyugin:98} has shown that one cannot generally compute a bound on the rate of convergence of a sequence of ergodic averages $A_n f$, even when $f$ is computable (see also \cite[Section 5]{avigad:et:al:10} and \cite{avigad:unp:o}). Similarly, there is a sense in which ergodic decomposition is not computable \cite{hoyrup:11}. The passage from a sequence of sets $(A_n)$ to one of the measures guaranteed to exist by the Furstenberg correspondence principle is certainly not computable, since it is not even continuous in the data.\footnote{There is, however, always such a measure that is \emph{low} in the Turing jump of the sequence $(A_n)$. In this sense the Furstenberg correspondence principle is analogous to the Bolzano-Weierstrass principle; see \cite{kreuzer:11,safarik:kohlenbach:10}.} The proof of theorem~\ref{inverse:thm} shows, however, that passage in the other direction is fully effective: one can explicitly compute a sequence of combinatorial approximations from the given measure.

This fact has a surprising consequence: every computable shift-invariant measure on $2^\NN$, ergodic or not, has a computable generic point. A number of recent papers \cite{galatolo:et:al:09,galatolo:et:al:unp:b,galatolo:et:al:unp} are concerned with identifying conditions under which a computable measure preserving system has such an element. Theorem~\ref{generic:thm} shows that not only is this always the case when the underlying dynamical system is $2^\NN$ with the shift, but, moreover, one has explicit rates of convergence that are independent of the measure in question.

The existence of generic points for shift-invariant measures on $2^\NN$ was first established by Colebrook \cite{colebrook:70}. The construction in Section~\ref{computable:section} depends on the fact that one can obtain a point of $2^\NN$ by piecing together finite specifications, and Sigmund \cite{sigmund:74} has shown that generic points exist, more generally, for measure-preserving systems satisfying the ``specification property.'' But it does not seem possible to adapt the computability results here to this more general setting; this is discussed in Section~\ref{questions:section}.

In recent years, the correspondence principle has been more broadly construed as a way of relating combinatorial configurations in a discrete group with measure-preserving systems on which this group acts. In particular, the principle has been generalized to countable discrete amenable groups in \cite{bergelson:mccutcheon:98}, \cite[Section 4]{bergelson:00}, and, more recently, \cite{bergelson:furstenberg:09}. It is noted in \cite[Section 1]{bergelson:furstenberg:09} that one can, conversely, pass in the other direction from ``space averages'' to combinatorial ``group averages'' in the case where the action of the group of the space is ergodic. Section~\ref{amenable:section} below again lifts the restriction to ergodic actions, and provides an effective version of the transformation.

I am grateful to Bryna Kra and Henry Towsner for comments and suggestions on an earlier draft; to Manfred Denker and Matthieu Hoyrup for subsequently pointing me to Sigmund's results; and, especially, to an anonymous referee for many helpful comments, corrections, suggestions, and references.

\section{Preliminaries}
\label{preliminaries:section}

For each natural number $n$, it is convenient to identify $\nn$ with the set $\{0, \ldots, n-1\}$, and to identify each subset $A$ of $\nn$ with the finite binary string of length $n$ whose $i$th digit is $1$ if and only if $i$ is in $A$. Note that this representation encodes both the set $A$ and the fact that $A$ is to be viewed as a subset of $\nn$. If $\sigma$ is another binary sequence, say that $\sigma$ \emph{occurs at position $i$ of $A$} if for every $j$ less than the length of $\sigma$, the $j$th bit of $\sigma$ agrees with the bit of $A$ at $(i + j) \mymod n$; that is, we let $\sigma$ wrap around to the beginning of $A$, if necessary, in doing the comparison. Given $A$ and $\sigma$, set
\begin{align*}
E_A(\sigma) & = \{ i < n \st \mbox{$\sigma$ occurs at position $i$ in $A$} \}, \\
N_A(\sigma) & = | E_A(\sigma) |, \quad \mbox{and}  \\
D_A(\sigma) & = N_A(\sigma) / n.
\end{align*}
So $E_A(\sigma)$ is the set of positions at which $\sigma$ occurs in $A$, 
$N_A(\sigma)$ is the number of times it occurs, and $D_A(\sigma)$ is the density of occurrences. Note that $E_A([1]) = A$.

Let $2^\NN$ denote Cantor space, that is, the space of functions from $\NN$ to the discrete space $\{0, 1\}$, under the product topology. If we view elements $\omega$ of $2^\NN$ as infinite binary sequences, it makes sense to write $\sigma \subset \omega$ to denote that the finite sequence $\sigma$ is an initial segment of $\omega$. The collection of cylinder sets $[\sigma]$ provides a basis for the topology on $2^\NN$, where $[\sigma] = \{ \omega \st \sigma \subset \omega \}$ is the set of infinite sequences extending $\sigma$. I will use $\mdl B$ to denote the Borel sets in this topology. Let $T$ denote the shift-left map on $2^\NN$, defined by setting $(T\omega)(n) = \omega(n + 1)$ for every $n$. Notice that for every $\sigma$, $T^{-1}([\sigma]) = [0\sigma] \cup [1\sigma]$, the set of infinite binary strings in which $\sigma$ occurs in the second position. Every finite subset $A$ of $\nn$ gives rise to a measure $\mu_A$ on the Borel subsets of Cantor space defined by setting $\mu_A([\sigma]) = D_A(\sigma)$ and applying the Caratheodory extension theorem. By the observations above, we have $\mu_A(T^{-1} [\sigma]) = \mu_A([\sigma])$, which is to say, $\mu$ is invariant under the shift. 

Now let $(A_n)$ be a sequence of sets of natural numbers, with $A_n \subseteq n$ for each $n$. By the compactness of the space of measures on Cantor space in the vague topology, there is a subsequence $(\mu_{A_{n_i}})_{i \in \NN}$ of $(\mu_{A_n})$ that converges weakly to a measure $\mu$ on Cantor space. In particular, for each $\sigma$, the sequence $(\mu_{A_{n_i}}([\sigma]))$, which is equal to the sequence $(D_{A_{n_i}}(\sigma))$, converges to $\mu([\sigma])$. Thus we have:
\begin{theorem}
 \label{furstenberg:correspondence:thm}
For every sequence $(A_n)$ of sets with $A_n \subseteq \nn$, there are a $T$-invariant measure $\mu$ on $2^\NN$ and a subsequence $(A_{n_i})$ of $(A_n)$ with the property that for every $\sigma$, $\mu([\sigma]) = \lim_{i \to \infty} D_{A_{n_i}}(\sigma)$.
\end{theorem}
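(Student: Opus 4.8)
The plan is to build $\mu$ as a subsequential limit of the finite measures $\mu_{A_n}$ and then check that the limit inherits both shift-invariance and the displayed density formula; essentially all of the content is packaged in the compactness of the space of measures on $2^\NN$ and in the elementary arithmetic of the sets $E_A(\sigma)$ recorded in the preliminaries. There are two ways to organize the compactness, and I would give the hands-on one, since the later effective refinements will need something concrete.

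First I would fix an enumeration $\sigma_0, \sigma_1, \sigma_2, \ldots$ of all finite binary strings. For each $k$ the sequence $\big(D_{A_n}(\sigma_k)\big)_n$ lies in the compact interval $[0,1]$, so by Bolzano--Weierstrass together with a diagonal argument there is a single subsequence $(A_{n_i})$ along which $\lim_{i \to \infty} D_{A_{n_i}}(\sigma_k)$ exists for every $k$ at once; write $\nu(\sigma_k)$ for this limit. (Alternatively, one invokes the weak-$*$ compactness of the space of Borel probability measures on the compact metric space $2^\NN$ — say via Banach--Alaoglu applied to the dual of the separable space $C(2^\NN)$ — to extract $\mu_{A_{n_i}} \to \mu$ weakly; since each cylinder $[\sigma]$ is clopen its indicator is continuous, so $D_{A_{n_i}}(\sigma) = \mu_{A_{n_i}}([\sigma]) \to \mu([\sigma])$, which is exactly the displayed identity. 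I would still carry out the concrete route for the statement as written.)

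Taking the concrete route, the next step is to see that $\nu$ is the restriction to cylinders of a genuine Borel measure. The algebraic facts needed are that for any finite $A \subseteq n$ one has $D_A(\langle\,\rangle) = 1$ and $D_A(\sigma) = D_A(\sigma 0) + D_A(\sigma 1)$, the latter because a position $i$ at which $\sigma$ occurs in $A$ is counted in exactly one of $E_A(\sigma 0)$, $E_A(\sigma 1)$ according to the bit of $A$ at $(i + \len(\sigma)) \mymod n$. These identities pass to the limit, so $\nu$ is a normalized, finitely additive premeasure on the algebra of clopen subsets of $2^\NN$ (every clopen set being a finite disjoint union of cylinders). Because $2^\NN$ is compact, this algebra is ``compact'' in the sense the Caratheodory extension theorem wants — any countable clopen cover of a clopen set has a finite subcover — so finite additivity already forces countable additivity on the algebra, and $\nu$ extends uniquely to a Borel probability measure $\mu$ with $\mu([\sigma]) = \nu(\sigma) = \lim_i D_{A_{n_i}}(\sigma)$.

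Finally, for $T$-invariance the preliminaries already record that $T^{-1}[\sigma] = [0\sigma] \cup [1\sigma]$ (a disjoint union) and that $D_A(0\sigma) + D_A(1\sigma) = D_A(\sigma)$, since the set of positions $i < n$ at which $\sigma$ occurs at $(i+1) \mymod n$ is a cyclic shift of $E_A(\sigma)$ and hence of the same size. Passing to the limit gives $\mu(T^{-1}[\sigma]) = \mu([0\sigma]) + \mu([1\sigma]) = \mu([\sigma])$ for every $\sigma$; as $\mu$ and $\mu \circ T^{-1}$ are Borel measures agreeing on the generating algebra of cylinders, they agree on all of $\mdl B$, so $\mu$ is $T$-invariant. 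The only step requiring genuine care rather than bookkeeping is the extension in the previous paragraph — upgrading the finitely additive limit functional to a countably additive measure — and that is precisely where the compactness of Cantor space does the work; everything else reduces to the arithmetic of the $E_A(\sigma)$ and the fact that weak limits respect continuous test functions.
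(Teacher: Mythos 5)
Your proof is correct and follows the same route the paper takes: the paper's primary argument is the weak-$*$ compactness of the space of measures on $2^\NN$ (which you note parenthetically), and the diagonal/thinning argument you carry out in detail is exactly the ``more direct'' alternative the paper sketches in one sentence after the theorem statement. The only difference is that you spell out the steps the paper leaves implicit --- finite additivity of the limit functional on cylinders, the Caratheodory extension via compactness of the clopen algebra, and passing shift-invariance through the limit --- all of which you do correctly.
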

This theorem can be proved more directly by iteratively thinning the sequence $(A_n)$ so that the densities converge for each $\sigma$, taking a diagonal subsequence, and then defining $\mu([\sigma])$ to be the resulting limit.

We can take Theorem~\ref{furstenberg:correspondence:thm} to be a precise statement of the Furstenberg correspondence principle, though sometimes the phrase is used to refer to one of its consequences. Note that since the size of $\sigma$ remains fixed as $n$ grows, the limits in question are not changed if we do not take wraparound into account when counting the number of occurrences of $\sigma$ in $A_n$.

\section{Inverting the correspondence}
\label{inverting:section}

In this section we complement Theorem~\ref{furstenberg:correspondence:thm} by showing that, in fact, any shift-invariant measure $\mu$ can be obtained as the result of the construction.

\begin{theorem}
\label{inverse:thm}
Let $\mu$ be any $T$-invariant measure on $2^\NN$. Then for each $j$ and $\varepsilon > 0$, there exist $n \leq 2^{O(j/\varepsilon)}$ and $A \subseteq \nn$ such that for every $\sigma$ of length at most $j$, $|\mu([\sigma])-D_A(\sigma)| < \varepsilon$. Moreover, there is an $m = m(j,\varepsilon)$ such that for any $n \geq m$ there is an $A \subseteq \nn$ with this property.
\end{theorem}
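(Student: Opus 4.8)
The plan is to build the set $A \subseteq n$ directly from the measure by concatenating long "blocks" of bits, where the blocks are sampled so that their empirical frequency of each finite word matches the measure of the corresponding cylinder set. The starting point is that a $T$-invariant measure $\mu$ on $2^\NN$ is, up to small error, determined on cylinders of length at most $j$ by finitely many numbers $\mu([\sigma])$, $|\sigma| \le j$; and $T$-invariance gives the consistency (Kolmogorov-type) relations $\mu([\sigma]) = \mu([0\sigma]) + \mu([1\sigma])$, together with $\mu([\sigma]) = \mu([\sigma 0]) + \mu([\sigma 1])$. First I would fix a length $L = L(j,\varepsilon)$, chosen much larger than $j$, and consider the distribution on binary strings of length $L$ induced by $\mu$, namely $\omega \mapsto \mu([\omega])$ for $\omega \in 2^L$. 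By the ergodic/combinatorial fact that in a $\mu$-random string of length $L$ the empirical density $D$ of each word $\sigma$ of length $\le j$ is within $\varepsilon/2$ of $\mu([\sigma])$ with high probability once $L$ is large enough (a union bound over the at most $2^{j+1}$ words, using that each such density has expectation $\approx \mu([\sigma])$ by invariance and variance $O(j/L)$ by a standard mixing-free second-moment estimate for overlapping occurrences), we can pick one string $w$ of length $L$ all of whose word-densities are this good.

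Next I would turn the single block $w$ into a set $A \subseteq n$ of the required near-optimal size. The naive move is to take $A$ to be $w$ itself with $n = L$; the issue is only the wraparound and boundary terms, but since $|\sigma| \le j$ and we may take $L \gg j/\varepsilon$, the $j$ boundary positions contribute at most $j/L < \varepsilon/2$ to any $D_A(\sigma)$, so in fact $|\mu([\sigma]) - D_A(\sigma)| < \varepsilon$ already. To get the "for all $n \ge m$" clause, I would instead set $m = m(j,\varepsilon)$ to be $L$ times a large constant and, given any $n \ge m$, write $n = qL + r$ with $0 \le r < L$, and let $A \subseteq n$ consist of $q$ consecutive copies of the block $w$ followed by an arbitrary filler of length $r$. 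Each of the $q$ internal copies reproduces the good word-densities up to the $O(j/L)$ seam error, and the single filler segment of length $r < L$ plus the $q$ seams together perturb each $D_A(\sigma)$ by at most $O(qj + L)/n = O(j/L + L/n)$, which is below $\varepsilon$ once $L$ and then $m/L$ are chosen large enough. This simultaneously yields the bound $n \le 2^{O(j/\varepsilon)}$ for the existence part, since $L$ can be taken to be $2^{O(j/\varepsilon)}$ — indeed the second-moment argument needs only $L = \Omega(j/\varepsilon^2)$, which is far below the stated exponential, the exponential slack presumably being an artifact of the cleanest way to produce the block $w$ deterministically.

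The main obstacle, and the part that needs genuine care rather than bookkeeping, is producing the block $w$ \emph{explicitly} — the theorem and its role in the computability section demand a construction, not a probabilistic existence proof. I expect the right way is to build $w$ bit by bit as a greedy "de Bruijn-like" walk: maintain a running frequency count of each word of length $j$, and at each step append whichever bit keeps all the running frequencies closest to the targets $\mu([\sigma])$; invariance of $\mu$ is exactly what guarantees the targets are mutually consistent so that this greedy choice never gets stuck and the discrepancies stay $O(j/(\text{current length}))$. Alternatively one can think of it as finding an Eulerian-type circuit in the de Bruijn graph on $j$-letter words with edge multiplicities proportional to $\mu$ of the corresponding $(j{+}1)$-cylinders, rounded to integers — here the flow-conservation at each vertex is precisely the invariance relation $\sum_a \mu([a\tau]) = \mu([\tau]) = \sum_a \mu([\tau a])$, and rounding introduces the $O(j/L)$ error. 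Making this circuit/greedy argument rigorous, and extracting from it both the explicit $n$ and the explicit modulus $m(j,\varepsilon)$, is where the real work lies; the rest is the routine density-perturbation estimates sketched above.
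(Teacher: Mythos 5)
There is a genuine gap, and it sits exactly at the point you flag as ``the main obstacle.'' Your first route for producing the block $w$ --- draw a $\mu$-random string of length $L$ and argue by a second-moment bound that its word-densities concentrate near $\mu([\sigma])$ --- fails for non-ergodic $\mu$, which is precisely the case the theorem is about. Take $\mu = \tfrac12\delta_{000\cdots} + \tfrac12\delta_{111\cdots}$: a $\mu$-random string of length $L$ is all zeros or all ones, so the empirical density of the word $1$ is $0$ or $1$ with probability $\tfrac12$ each, and its variance is $\tfrac14$ for every $L$. The variance of $\frac1L\sum_{i<L}1_{[\sigma]}(T^i\omega)$ does not decay without some mixing hypothesis; ``expectation $\approx \mu([\sigma])$ by invariance'' is correct, but concentration is exactly what non-ergodicity destroys. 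Your fallback --- the greedy walk or the Eulerian circuit in the de Bruijn graph with edge multiplicities $\approx \mu([\sigma])\cdot L$ for $|\sigma|=j+1$ --- is a workable direction, but you explicitly leave it as ``where the real work lies,'' and it has at least one unaddressed obstruction: for non-ergodic $\mu$ the support of the edge multiplicities need not be connected (in the example above the graph is two disjoint self-loops), so no single Eulerian circuit exists; one must take a circuit in each component and concatenate, controlling the $O(j)$ error per seam and the parity/rounding imbalances at each vertex. So as written the proposal does not constitute a proof.

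For comparison, the paper avoids constructing any single ``good'' block. It fixes $k$ and uses $T$-invariance to average over shifts, obtaining the identity $\mu([\sigma]) = \sum_{|\tau|=k} D_\tau(\sigma)\,\mu([\tau]) + O(j/k)$: the target value is, up to a boundary error, the $\mu([\tau])$-weighted average of the densities of $\sigma$ inside \emph{all} blocks $\tau$ of length $k$. The set $A$ is then the concatenation of $a_\tau$ copies of each $\tau$, where $a_\tau/l$ is a rounding of $\mu([\tau])$ with $\sum_\tau a_\tau = l$; no individual block need have good statistics, only the mixture does. This is both simpler and manifestly effective, and it is where the $2^{O(j/\varepsilon)}$ bound comes from ($l$ must dominate $2^k/\varepsilon$ to absorb the $2^k$ rounding errors). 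If you want to salvage your approach, the de Bruijn-circuit route can likely be pushed through and would even give a polynomial rather than exponential bound on $n$, but the connectivity and rounding issues above are the substance of that argument, not bookkeeping.
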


We can abbreviate the conclusion of the theorem by saying that $A$ gives a $(j,\varepsilon)$-good approximation to $\mu$. The first claim provides an explicit bound on how large $A$ needs to be to provide such an approximation. It would be interesting to know whether this bound can be improved. It is the second claim, however, the provides a natural inverse to Theorem~\ref{furstenberg:correspondence:thm}: if for each $j$ we choose $m_j$ large enough to ensure there are $(j,1/j)$-good approximations for any $n \geq m_j$, then for any $\mu$ we can build a sequence which contains such approximations between $m_j$ to $m_{j+1}$. Any measure satisfying the conclusion of Theorem~\ref{furstenberg:correspondence:thm} then has to coincide with $\mu$.

\begin{proof}
Fix $j$ and $\varepsilon > 0$, and let $k$ be an integer much larger than $j$ and $1 / \varepsilon$, to be specified more precisely later on. Then the set $\{ [\tau] \st \len(\tau) = k\}$ forms a partition of $2^\omega$, and if $i < k - j$, 
\[
 \mu(T^{-i}[\sigma] \cap [\tau]) = \left\{
  \begin{array}{ll}
     \mu([\tau]) & \mbox{if $\sigma$ occurs at position $i$ of $\tau$} \\
     0 & \mbox{otherwise.}
  \end{array}\right.
\]
Recall that $N_\tau(\sigma)$ denotes the number of times that $\sigma$ occurs in $\tau$. By the $T$-invariance of $\mu$, we have
\begin{align*}
 \mu([\sigma]) & = \frac{1}{k} \sum_{i < k} \mu(T^{-i}[\sigma]) \\
   & = \frac{1}{k} \sum_{i < k} \sum_\tau \mu(T^{-i}[\sigma] \cap [\tau]) \\
   & = \sum_\tau \frac{1}{k} \sum_{i < k} \mu(T^{-i}[\sigma] \cap [\tau]) \\
   & = \sum_\tau \frac{1}{k} (N_\tau(\sigma) + O(j)) \mu([\tau]) \\
   & = \sum_\tau D_\tau(\sigma) \mu([\tau]) + O(j/k).
\end{align*}
In other words, if the $\tau$'s are sufficiently long, the average of the densities of $\sigma$ in each $\tau$, weighted by $\mu([\tau])$, provide a good approximation to $\mu([\sigma])$. We now obtain the desired set $A$ by concatenating copies of the $\tau$'s in the right proportion; the fact that $k$ is much larger than $j$ will ensure that the occurrences of $\sigma$ near the border between copies of the $\tau$'s will have a negligible contribution to the overall density.

More precisely, let $l$ be much larger than $k$ and $1 / \varepsilon$, and let $\tau_0, \tau_1, \ldots, \tau_{2^k-1}$ be an enumeration of the sequences of length $k$. For each $i \leq 2^k$ let $b_i$ be the closest integer to $(\sum_{m < i} \mu([\tau_m])) \cdot l$, and let $a_{\tau_i} = b_{i+1} - b_i$. Then the values $a_\tau / l$ provide a good rational approximation to $\mu([\tau])$, with
\begin{equation*}
 \label{ai:approx:eq}
 | \mu([\tau]) - \frac{a_\tau}{l}| < \frac{2}{l} \quad \mbox{and} \quad \sum_\tau a_\tau = b_{2^k} = l.
\end{equation*}
Let $A$ be the set obtained by concatenating $a_0$ copies of $\tau_0$, followed by $a_1$ copies of $\tau_1$, and so on. Then $A$ has length $k l$. Accounting for occurrences of $\sigma$ near the border between such copies, the total number of occurrences of $\sigma$ in $A$ is given by
\[
 N_A(\sigma) = \sum_\tau a_\tau N_\tau(\sigma) + l \cdot O(j).
\]
Dividing by $kl$, we have
\begin{align*}
 D_A(\sigma) & = \sum_\tau \frac{a_\tau}{l} D_\tau(\sigma) + O(\frac{j}{k}) \\
   & = \sum_\tau \mu([\tau]) D_\tau(\sigma) + O(\frac{2^k}{l}) + O(\frac{j}{k}) \\
   & = \mu([\sigma]) + O(\frac{2^k}{l}) + O(\frac{j}{k}),
\end{align*}
using the previous expression for $\mu([\sigma])$. Now we only need to choose $k = O(j / \varepsilon)$ large enough to make the second error less than $\varepsilon / 2$, and then choose $l = O(2^k / \varepsilon) = 2^{O(j / \varepsilon)}$ to make the first error less than $\varepsilon / 2$. The length of $A$ is then $k l = 2^{O(j / \varepsilon)}$, as desired.

The last claim of the proof is easily obtained, for example, by concatenating sufficiently many copies of $A$ and truncating as necessary.
\end{proof}

As is well known, $2^\NN$ with the left shift is universal for measure-preserving systems with a distinguished set, in the following sense (see, for example, \cite[Example 2.2.6]{tao:09}). Let $\mdl X = (X, \mdl C, \nu, U)$ be a measure-preserving system, and let $E$ be a $\mdl C$-measurable set. Define a function $\ph$ from $X$ to $2^\NN$ by
\[
 (\ph(x))_i = \left\{
    \begin{array}{ll}
       1 & \mbox{if $U^i x \in E$} \\
       0 & \mbox{otherwise.}
    \end{array}
  \right.
\]
In other words, the 1's in $\ph(x)$ correspond to places where the orbit of $x$ under $U$ lands in $E$. Then for any $x \in X$, $T \ph(x) = \ph(U x)$, and $\ph^{-1}([1]) = E$. Moreover, $\ph^{-1}$, as a function on subsets of $2^\NN$, is a $\sigma$-algebra homomorphism from $\mdl B$ onto the $\sigma$-subalgebra of $\mdl C$ generated by $E$. Define the ``push-forward'' measure $\mu$ on $\mdl B$ by setting $\mu(A) = \nu(\ph^{-1}(A))$ for every $A$ in $\mdl B$. Then $\mu$ is a $T$-invariant measure, which encodes information about the measure $\nu$ on intersections of finite shifts of $E$ and their complements: if we let $(U^{-i} E)^{\sigma_i}$ denote $U^{-i} E$ if $\sigma_i = 1$ and $X \setminus U^{-i} E$ if $\sigma_i = 0$, we have
$\mu([\sigma]) = \nu(\bigcap_{i < \len(\sigma)} (U^{-i} E)^{\sigma_i})$.
This allows us to generalize the statement of Theorem~\ref{inverse:thm}:
\begin{corollary}
 \label{inverse:cor}
Let $\mdl X = (X, \mdl C, \nu, U)$ be any measure-preserving system, and let $E$ be any $\mdl C$-measurable set. Then for each $j$ and $\varepsilon > 0$, there exist $n$ and $A \subseteq \nn$ such that for every $\sigma$ of length at most $j$, 
\[
\left|\nu\left(\bigcap_{i < \len(\sigma)} (U^{-i} E)^{\sigma_i}\right) - D_A(\sigma)\right| < \varepsilon.
\]
\end{corollary}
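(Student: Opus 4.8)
The plan is to reduce the statement immediately to Theorem~\ref{inverse:thm} by passing to the push-forward measure described just above. First I would let $\ph \colon X \to 2^\NN$ be the map defined in the preceding discussion, and let $\mu$ be the push-forward measure on $\mdl B$ given by $\mu(B) = \nu(\ph^{-1}(B))$ for $B \in \mdl B$. The discussion preceding the corollary shows that $\mu$ is well-defined and countably additive (since $\ph^{-1}$ is a $\sigma$-algebra homomorphism from $\mdl B$ onto the $\sigma$-subalgebra of $\mdl C$ generated by $E$, and $\nu$ is a measure), that $\mu$ is $T$-invariant (using $T \circ \ph = \ph \circ U$ together with the $U$-invariance of $\nu$), and that $\mu([\sigma]) = \nu(\bigcap_{i < \len(\sigma)} (U^{-i}E)^{\sigma_i})$ for every finite binary string $\sigma$.

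Next I would apply Theorem~\ref{inverse:thm} to the $T$-invariant measure $\mu$, with the given $j$ and $\varepsilon$. This produces an $n$ and a set $A \subseteq \nn$ such that $|\mu([\sigma]) - D_A(\sigma)| < \varepsilon$ for every $\sigma$ of length at most $j$. Substituting the identity $\mu([\sigma]) = \nu(\bigcap_{i < \len(\sigma)} (U^{-i}E)^{\sigma_i})$ into this inequality yields exactly the conclusion of the corollary. If one wishes, the quantitative content of Theorem~\ref{inverse:thm} transfers verbatim: the same $n \leq 2^{O(j/\varepsilon)}$ works, and there is an $m = m(j,\varepsilon)$ such that every $n \geq m$ admits a set $A$ with the stated property.

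There is no real obstacle here: all of the substance is contained in Theorem~\ref{inverse:thm}, and the only thing requiring verification is the bookkeeping that exhibits $\mu$ as a genuine $T$-invariant Borel measure with the prescribed values on cylinder sets. The single point worth stating with care is that $\mu$ is a countably additive measure and not merely a finitely additive set function — which holds because $\ph^{-1}$ commutes with countable unions and intersections — so that Theorem~\ref{inverse:thm} is applicable; after that, the theorem does all the work.
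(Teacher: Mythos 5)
Your proof is correct and is exactly the paper's intended argument: the corollary is stated immediately after the construction of the push-forward measure $\mu(B) = \nu(\ph^{-1}(B))$ and the identity $\mu([\sigma]) = \nu(\bigcap_{i < \len(\sigma)} (U^{-i}E)^{\sigma_i})$, and it follows by applying Theorem~\ref{inverse:thm} to $\mu$ just as you describe. Your extra remarks on countable additivity and the transfer of the quantitative bounds are accurate but not needed beyond what the preceding discussion already establishes.
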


\section{Consequences for computable measure theory}
\label{computable:section}

We can also consider Theorem~\ref{inverse:thm} in computability-theoretic terms. This presupposes some notions from computable analysis and measure theory; I will sketch the necessary background here, and refer the reader to \cite{brattka:et:al:08,hoyrup:rojas:09b,weihrauch:99} for details. 

Computability theory starts with the notion of a \emph{computable} function from the natural numbers to natural numbers, or from finite strings of symbols to finite strings of symbols. One then obtains notions of computability with respect to other finitary objects (integers, pairs of numbers, finite graphs, and so on) by fixing encodings of these objects as numbers or strings. Intuitively, a function from a set of finitary objects to another is said to be computable if there is an algorithm, or computer program, that computes it. This notion can be made precise using the Turing machine model of computation and fixing the various encodings, but for practical purposes the intuitive description suffices.

Computable analysis has to take into account the representation of infinitary objects, like the real numbers, which cannot be encoded with a finite amount of data. We define a real number $r$ to be computable if there is a computable function $f$ from $\NN$ to $\QQ$ such that for every $i$, $|r - f(i)| < 2^{-i}$. In other words, $r$ is computable if one can compute arbitrarily good rational approximations to it. Notice that the choice of $i \mapsto 2^{-i}$ as a rate of convergence is somewhat arbitrary, and the definition is unchanged if one replaces $2^{-i}$ with any computable sequence of rationals that decreases to $0$; from any such representation we can obtain any other. Notice that we have defined a real number to be computable if it has a computable \emph{representation} as a Cauchy sequence of rationals with a fixed rate of convergence; a given computable real will have multiple computable representations.

How shall we define a computable function from $\RR$ to $\RR$? The problem is that the inputs to such a function are no longer finite objects. The standard solution is to say that a function $F$ from $\RR$ to $\RR$ is computable if there is an algorithm which, on input $i$, is allowed to ask for rational approximations of the input $x$ to any desired accuracy and, after finitely many such queries, terminates and returns an approximation of $F(x)$ to within $2^{-i}$. The notion can be made precise in terms of a Turing machine with access to an oracle tape that contains a representation of the input, but, once again, for practical purposes, the intuitive description suffices. This model of computation on the real numbers was originally proposed by Grzegorczyk~\cite{grzegorczyk:57}, and is an instance of what is generally referred to as ``type 2 computability'' today \cite{brattka:et:al:08,weihrauch:99}. Notice that the algorithm computing $F$ is supposed to act appropriately on any representation of a real number $x$, whether $x$ is computable or not. One can show that any computable function $F$ from $\RR$ to $\RR$ is continuous; roughly speaking, this holds because finite approximations to the value of $F(x)$ depend on only a finite amount of the data representing $x$. 

There is nothing special about the real numbers; the method carries over to any system of elements that can be given the structure of a separable metric space. Suppose $(X, d)$ is such a space and $A$ is a countable dense subset of $X$. Assuming one has finitary representations of the elements of $A$ such that the distances between these elements are computable, then if one replaces $\RR$ and $\QQ$ in the preceding discussion by $X$ and $A$, respectively, one obtains notions of a ``computable element of $X$'' and a ``computable function on $X$.'' We will not need the full generality of these definitions here. Instead, I will focus on how they play out for $2^\NN$ and the space of measures on $2^\NN$.

An element $\omega$ of $2^\NN$ is computable if and only if the function from $\NN$ to $\{0, 1\}$ which, on input $i$, returns the $i$th digit of $\omega$ is computable. Similarly, a computable function $T$ from $2^\NN$ to $2^\NN$ is given by an algorithm which, for every $i$, computes the $i$th bit of $T \omega$ after querying finitely many bits of $\omega$. For example, if $T$ is the left shift, then $T$ is easily seen to be computable, because in order to output the $i$th bit of $T\omega$ one need only query the $(i+1)$st bit of $\omega$.

A measure $\mu$ on $2^\NN$ is said to be computable if there is an algorithm which, on input $\sigma$, computes (arbitrarily good rational approximations to) $\mu([\sigma])$. In other words, $\mu$ is computable if there is an algorithm which, on input $\sigma$ and $i$, computes a rational approximation of $\mu([\sigma])$ to within $2^{-i}$. More generally, one can take an arbitrary measure $\mu$ to be represented by such a function from $S \times \NN$ to $\QQ$, where $S$ is the set of finite binary strings. As in the case of the real numbers, it makes sense to talk about algorithms that carry out computations relative to such a representation. 

The following theorem provides a sense in which, from a computational point of view, a measure on $2^\NN$ is ``morally equivalent'' to a sequence of $(j,\varepsilon)$ good approximations.

\begin{theorem}
  There are a computable function $m(j,\varepsilon)$ and an algorithm with the following property: given any representation of a measure $\mu$ on $2^\NN$, the algorithm computes a sequence $(A_n)$ of subsets of $n$ such that for every $n \geq m(j,\varepsilon)$, $(A_n)$ is an $(j,\varepsilon)$-good approximation to $\mu$. Conversely, there is an algorithm which, given a representation of such a function $m$ and sequence $(A_n)$, computes the measure $\mu$.
\end{theorem}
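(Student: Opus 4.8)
The plan is to extract the first (forward) half directly from the \emph{explicit} construction in the proof of Theorem~\ref{inverse:thm}, observing that every step of that construction is effective relative to a representation of $\mu$, and to obtain the converse half by a one-line table lookup. (Throughout, $\mu$ denotes a $T$-invariant probability measure, as in Theorem~\ref{inverse:thm}.) Two preliminary observations make the forward half go smoothly. First, a \emph{monotonicity} fact: if $A \subseteq \nn$ gives a $(j,\varepsilon)$-good approximation to $\mu$, then it also gives a $(j',\varepsilon')$-good approximation whenever $j' \leq j$ and $\varepsilon' \geq \varepsilon$, since the inequalities required for $(j',\varepsilon')$ form a subset of those required for $(j,\varepsilon)$. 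Second, the $O(\cdot)$'s in Theorem~\ref{inverse:thm} can be made fully explicit: tracing the proof one takes $k = c_1 j/\varepsilon$ and $l = c_2 \cdot 2^k/\varepsilon$ for absolute constants $c_1, c_2$, and the argument of the ``last claim'' --- concatenate copies of the length-$kl$ set and truncate --- then yields a $(j,\varepsilon)$-good $A$ of length \emph{exactly} $n$ for every $n$ above an explicit threshold $\hat m(j,\varepsilon) = 2^{O(j/\varepsilon)}$ with the constant in the exponent spelled out. In particular $\hat m$ is a computable function of $j$ and rational $\varepsilon$.

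Next I would check that the construction is computable from a representation of $\mu$. It enumerates the $2^k$ strings $\tau$ of length $k$, forms the partial sums $\sum_{m<i}\mu([\tau_m])$, sets $b_i$ and $a_{\tau_i} = b_{i+1} - b_i$, concatenates $a_\tau$ copies of each $\tau$, and truncates to length $n$; apart from the $b_i$ this uses only finitely many values $\mu([\tau])$ and elementary arithmetic on finite strings, so it is plainly effective. The one delicate point is that the prescription ``$b_i =$ the closest integer to $(\sum_{m<i}\mu([\tau_m]))\cdot l$'' refers to exact reals and is discontinuous in them. This is easily repaired: query a rational $q_i$ with $|q_i - \sum_{m<i}\mu([\tau_m])| < 1/l$, clamp so that $q_0 = 0 \leq q_1 \leq \cdots \leq q_{2^k} = 1$, and set $b_i = \lfloor q_i l\rfloor$. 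Then $\sum_\tau a_\tau = l$ and $|a_\tau/l - \mu([\tau])| = O(1/l)$, so all the estimates in the proof of Theorem~\ref{inverse:thm} survive with slightly larger constants, which is all that is needed. Thus there is an algorithm which, given $(j,\varepsilon,n)$ with $n \geq \hat m(j,\varepsilon)$ together with a representation of $\mu$, outputs a $(j,\varepsilon)$-good $A \subseteq \nn$.

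To combine these outputs into a single sequence $(A_n)$ I would diagonalize over the pairs $(j, 1/j)$, $j \geq 1$: fix an explicit strictly increasing sequence $m_1 < m_2 < \cdots$ with $m_j \geq \hat m(j, 1/j)$, for $m_j \leq n < m_{j+1}$ let $A_n$ be the algorithm's output on $(j, 1/j, n)$, and let $A_n = \emptyset$ for $n < m_1$. Define $m(j_0,\varepsilon_0) = m_{j}$ with $j = \max(j_0, \lceil 1/\varepsilon_0\rceil)$; this is computable since $\hat m$ is. Given $j_0,\varepsilon_0$ and $n \geq m(j_0,\varepsilon_0)$, we have $n \in [m_{j'}, m_{j'+1})$ for some $j' \geq j$, so $A_n$ is $(j', 1/j')$-good, hence $(j_0,\varepsilon_0)$-good by the monotonicity fact (as $j' \geq j \geq j_0$ and $1/j' \leq 1/j \leq \varepsilon_0$). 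This establishes the forward half. The converse is immediate: given oracles for $m$ and for $(A_n)$, to compute $\mu([\sigma])$ to within $2^{-i}$ put $j = \len(\sigma)$ and $\varepsilon = 2^{-i-1}$, compute $n = m(j,\varepsilon)$, and return the rational $D_{A_n}(\sigma) = N_{A_n}(\sigma)/n$, which by hypothesis is within $\varepsilon < 2^{-i}$ of $\mu([\sigma])$. (The data $(m, (A_n))$ determines $\mu$ uniquely, since it pins down every $\mu([\sigma])$ and $\mu$ is recovered from its values on cylinders by Carath\'eodory extension; so the algorithm computes \emph{the} measure $\mu$.)

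I expect the only real work to be in the forward half --- making the constants hidden in Theorem~\ref{inverse:thm} explicit, so that $\hat m$, and hence $m$, is a bona fide computable function, and handling the discontinuity of the rounding step by the approximation argument above. The diagonalization and the converse are routine.
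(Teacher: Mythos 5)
Your proposal is correct and follows essentially the same route as the paper: the forward half is read off from the explicit construction in the proof of Theorem~\ref{inverse:thm}, and the converse is the one-line evaluation of $D_{A_n}(\sigma)$ at $n = m(\len(\sigma),\varepsilon)$. The two details you add --- replacing the noncomputable ``closest integer'' rounding by a floor of a queried rational approximation, and diagonalizing over the pairs $(j,1/j)$ with the monotonicity observation to get a single sequence $(A_n)$ that is simultaneously $(j,\varepsilon)$-good for all pairs --- are points the paper leaves implicit, and you handle them correctly.
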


\begin{proof}
 Let $m(j, \varepsilon)$ be as in the statement of Theorem~\ref{inverse:thm}. The proof of Theorem~\ref{inverse:thm} provides the requisite algorithm, that is, for each $n$, an explicit description of how to obtain $A \subseteq n$ from finitely many values of $\mu$ on cylinder sets. Conversely, given $m$ and $(A_n)$, to compute $\mu([\sigma])$ to within $\varepsilon$, let $j = \len(\sigma)$, choose $n = m(j, \varepsilon)$, and compute $D_{A_n}(\sigma)$.
\end{proof}

If $\mu$ is a shift-invariant measure on $2^\NN$, a point $\omega$ of $2^\NN$ is \emph{generic} if for every finite binary sequence $\sigma$, 
\[
 \mu([\sigma]) = \lim_{n \to \infty} \frac{1}{n} \sum_{i < n} 1_{[\sigma]} (T^i \omega).
\]
In other words, for every $\sigma$, the limiting frequency of occurrences of $\sigma$ in $\omega$ is $\mu([\sigma])$ (see, for example, \cite{hoyrup:11}). The following theorem shows that given a shift-invariant measure $\mu$ on $2^\NN$, one can compute a generic point, such that the rate of convergence of the limit above is moreover computable (and independent of $\mu$).

\begin{theorem}
\label{generic:thm}
 There is a computable function $m(j,\varepsilon)$ with the following property. Given a representation of a shift invariant measure $\mu$ on $2^\NN$, one can compute a point $\omega$ that is generic for $\mu$, with the additional property that for every $\sigma$ of length $j$, every $\varepsilon > 0$, and every $n \geq m(j,\varepsilon)$, $| \mu([\sigma]) - \frac{1}{n} \sum_{i < n} 1_{[\sigma]} (T^i \omega) | < \varepsilon$.
\end{theorem}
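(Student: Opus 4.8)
The plan is to build $\omega$ by concatenating the finite approximations produced by Theorem~\ref{inverse:thm}, letting the accuracy improve as we move out along $\omega$. Concretely, fix a computable sequence $(j_t, \varepsilon_t)$ with $j_t \to \infty$ and $\varepsilon_t \to 0$ (say $j_t = t$, $\varepsilon_t = 1/t$). Using the second claim of Theorem~\ref{inverse:thm} and the fact that the proof there is effective, we can compute, for each $t$, a threshold $m_t := m(j_t, \varepsilon_t)$ and, for a suitable $n_t \geq m_t$, a set $A_t \subseteq n_t$ that gives a $(j_t, \varepsilon_t)$-good approximation to $\mu$. Moreover, by concatenating many copies of $A_t$ and truncating (exactly as in the last line of the proof of Theorem~\ref{inverse:thm}), we may take each $n_t$ as large as we please, and in particular arrange $n_t \gg n_{t-1}$; call $L_t = n_1 + \cdots + n_t$ the cumulative length. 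Define $\omega$ to be the infinite binary string whose block from position $L_{t-1}$ to $L_t - 1$ is (the string coding) $A_t$. Since each $A_t$ is computed from finitely many values of $\mu$ on cylinder sets, $\omega$ is computable from any representation of $\mu$, uniformly.

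Next I would verify genericity with a uniform rate. Fix $\sigma$ of length $j$ and $n$ large. Locate the block index $s$ with $L_{s-1} \le n < L_s$, so $n$ falls inside (or just past the end of) the block containing $A_s$; since the block lengths grow quickly, the prefix of $\omega$ of length $n$ consists of the complete blocks $A_1, \ldots, A_{s-1}$ (total length $L_{s-1}$) together with an initial segment of $A_s$ of length $n - L_{s-1}$. The frequency $\frac{1}{n}\sum_{i<n} 1_{[\sigma]}(T^i\omega)$ is then a length-weighted average of the densities $D_{A_t}(\sigma)$ for $t < s$, plus a contribution from the partial block $A_s$, plus boundary corrections of size $O(j)$ at each of the $s$ block junctions, i.e.\ an $O(js/n)$ error. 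For every $t \ge t_0(j,\varepsilon)$, where $t_0$ is the least $t$ with $j_t \ge j$ and $\varepsilon_t \le \varepsilon/2$, we have $|D_{A_t}(\sigma) - \mu([\sigma])| < \varepsilon/2$; the contribution from the finitely many early blocks $t < t_0$ is at most $L_{t_0 - 1}/n$. Choosing the growth of the $n_t$ fast enough (again under our control, since we may inflate each $n_t$ by concatenation) so that $L_{t-1}/L_t$ and $j\cdot t / L_t$ are both small once $t \ge t_0$, we get that for all $n \ge m(j,\varepsilon)$ — where $m(j,\varepsilon)$ depends only on $j$, $\varepsilon$, and the fixed computable schedule, not on $\mu$ — the frequency is within $\varepsilon$ of $\mu([\sigma])$.

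The one point requiring care is making the threshold function $m(j,\varepsilon)$ genuinely independent of $\mu$. The block lengths $n_t$ and the bound $m_t = m(j_t,\varepsilon_t)$ from Theorem~\ref{inverse:thm} are already $\mu$-independent (they depend only on $j_t,\varepsilon_t$), and we are free to \emph{predetermine} an inflation schedule $n_t := N_t$ for an explicitly chosen fast-growing computable sequence $N_t \ge m_t$ before ever looking at $\mu$; the algorithm then just fills block $t$ with the appropriate concatenation-and-truncation of the Theorem~\ref{inverse:thm} approximation at length $N_t$. With the schedule fixed in advance, $L_t$ is a fixed computable function of $t$, and the estimate in the previous paragraph yields a single computable $m(j,\varepsilon)$ that works for every shift-invariant $\mu$ simultaneously. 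The main obstacle is precisely this bookkeeping: choosing the growth rate of $N_t$ so that the tail error ($\varepsilon_t < \varepsilon/2$ for $t\ge t_0$, giving $\le \varepsilon/2$), the head error ($L_{t_0-1}/n$), and the $O(jt/n)$ boundary error all fall below $\varepsilon$ for the same explicit $n$-threshold, uniformly in $\mu$ — everything else is the routine weighted-average estimate already carried out inside the proof of Theorem~\ref{inverse:thm}.
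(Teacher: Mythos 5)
Your proposal is correct and follows essentially the same route as the paper: concatenate the $(j_t,\varepsilon_t)$-good approximations from Theorem~\ref{inverse:thm} in blocks whose lengths grow on a predetermined, $\mu$-independent computable schedule, so that boundary effects, the early blocks, and the final partial block are all negligible, which is exactly how the paper obtains a rate $m(j,\varepsilon)$ independent of $\mu$. The only cosmetic difference is your accuracy schedule $\varepsilon_t = 1/t$ versus the paper's $2^{-j}$, which changes nothing.
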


\begin{proof}
 Given $\mu$, for each $j$ let $A_j$ provide a $(j,2^{-j})$-good approximation to $\mu$ with length bounded as in Theorem~\ref{inverse:thm}. The idea is to build $\omega$ by concatenating copies of $A_1$, then copies of $A_2$, then copies of $A_3$, and so on, choosing enough copies at each stage to ensure that the transitions are smooth.
 Specifically, construct $\omega = \tau_0 \tau_1 \tau_2 \ldots$ in stages, as follows. First, define $\tau_0$ to be the empty sequence. Now, assuming $\tau_0, \ldots, \tau_l$ are defined, set $m_l = \len(\tau_0 \tau_1 \ldots \tau_l)$, and let $k = m + \len(A_{l+2})$. Let $\tau_{l+1}$ be the concatenation of enough copies of $A_{l+1}$ so that $k / \len(\tau_{l + 1}) < 2^{-(l+1)}$. Then a routine calculation shows that for every $\sigma$ of length at most $l + 1$ and $n \geq m_{l+1}$, $|\mu([\sigma]) - \frac{1}{n} \sum_{i < n} 1_{[\sigma]} (T^i \omega)| < 2^{-l}$. Clearly $\omega$ can be computed from $\mu$, and a bound $m(j,\varepsilon)$ on $m_{\max(j,\lceil \log_2(\varepsilon^{-1})\rceil + 1)}$ can be computed outright, independent of $\mu$.
\end{proof}

\section{An extension to amenable groups}
\label{amenable:section}

In recent years the correspondence principle has typically been construed more abstractly as a way of relating combinatorial configurations in a discrete group with measure-preserving systems on which this group acts. The principle has been generalized to countable discrete amenable groups in \cite{bergelson:mccutcheon:98}, \cite[Section 4]{bergelson:00}, and even more broadly in \cite{bergelson:furstenberg:09}. (See also \cite{towsner:unp}.) The conventional way of passing in the other direction, from ``space averages'' to ``group averages,'' relies on the pointwise ergodic theorem and works only for ergodic measures. In this section, we provide an effective proof that once again avoids the assumption of ergodicity.

\newcommand{\symdiff}{\mathop{\Delta}}

A countable discrete group $\Gamma$ is said to be \emph{amenable} if for every finite $K \subset \Gamma$ and $\varepsilon > 0$ there is a finite $F \subset \Gamma$ such that $| F \symdiff K F | < \varepsilon \cdot | F |$. Given such a $\Gamma$, we can fix a sequence $F_0 \subseteq F_1 \subseteq F_2 \subseteq \ldots \subseteq \Gamma$ such that $\bigcup_i F_i = \Gamma$ and for every finite set $K$ there is an $i$ such that $|F_j \symdiff K F_j| < \varepsilon \cdot |F_j|$ for every $j \geq i$. Such a sequence is called a \emph{F{\o}lner sequence}. 

Here the natural analogue to $2^\NN$ is $2^\Gamma$ under the product topology. For each $\gamma \in \Gamma$, $\gamma$ gives rise to the action $T_\gamma$ on $2^\Gamma$ defined by $(T_\gamma \omega)(\alpha) = \omega(\gamma \alpha)$. A measure $\mu$ on $2^\Gamma$ is said to be $\Gamma$-invariant if $T_\gamma$ preserves $\mu$ for each $\gamma$. On the natural numbers, $(\{0,\ldots,n-1\})_n$ forms a F{\o}lner sequence, and it is natural to associate each element of that sequence with the corresponding cyclic subgroup. In general, however, there is no way to associate a group to each element $F_n$ of a F{\o}lner sequence, nor a way to paste copies of such groups together. As a result, we need a more general framework.

Fix $\Gamma$. Given a finite subset $F$ of $\Gamma$ and a set $X$, we define a \emph{partial action of $F$ on $X$} to consist of a partial function $x \mapsto \gamma x$ for each $\gamma$ in $F$, satisfying $1 x = x$ for every $x$, and $\gamma (\gamma' x) = (\gamma \gamma') x$ whenever $\gamma$, $\gamma'$, and $\gamma \gamma'$ are all in $F$, and both sides of the equation are defined. Say that the \emph{domain} of $F$ with respect to this partial action is the intersection of the domains of the $\gamma$'s, as $\gamma$ ranges over the elements of $F$. In other words, an element $i \in X$ is in the domain of the partial action if $\gamma i$ is defined for each $i$ in $F$.

A pattern, $\sigma$, is now a map from some finite subset of $\Gamma$ to $\{0,1\}$. As above, the standard topology on $2^{\Gamma}$ is generated by the cylinder sets $[\sigma]$, where $[\sigma] = \{ \omega \st \mbox{$\omega(i) = \sigma(i)$ for every $i \in \fn{dom} (\sigma)$} \}$. Fix a finite subset $F$ of $\Gamma$ and an action of $F$ on some finite set $X$. If $A$ is a subset of $X$ and $i$ is an element of $X$, say that \emph{$\sigma$ occurs at position $i$ in $A$} if and only if for every $\alpha$ in the domain of $\sigma$, $\sigma(\alpha) = 1$ if and only if $\alpha i$ is defined and in $A$. As in Section~\ref{preliminaries:section}, define
\begin{align*}
N_A(\sigma) & = | \{ i \in X \st \mbox{$\sigma$ occurs at position $i$ in $A$} \} |, \quad \mbox{and}  \\
D_A(\sigma) & = N_A(\sigma) / n,
\end{align*}
where the set $X$ and the partial action on $X$ are left implicit in the notation. 

The following theorem provides one formulation of the correspondence principle for amenable groups. 

\begin{theorem}
 Let $\Gamma$ be a countable discrete amenable group, with F{\o}lner sequence $(F_n)$. Let $(X_n)$ be a sequence of sets, where each $X_n$ equipped with a partial action of $F_n$ such that $\lim_n |\fn{dom}(F_n)| / | X_n | = 1$. Then for any sequence of sets $(A_n)$, where $A_n \subseteq X_n$ for each $n$, there are a $\Gamma$-invariant measure $\mu$ on $2^\Gamma$ and a subsequence $(A_{n_i})$ of $(A_n)$ with the property that for every pattern $\sigma$, $\mu([\sigma]) = \lim_{i \to \infty} D_{A_{n_i}}(\sigma)$.
\end{theorem}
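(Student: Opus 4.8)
The plan is to follow the direct argument sketched right after Theorem~\ref{furstenberg:correspondence:thm}, with ``position in a cyclic string'' replaced by ``position with respect to a partial action.'' Since $\Gamma$ is countable there are only countably many patterns (each has finite domain), so enumerate them as $\sigma_0,\sigma_1,\ldots$. For each $k$ the sequence $(D_{A_n}(\sigma_k))_n$ lies in $[0,1]$, so a standard diagonalization produces a subsequence $(A_{n_i})$ along which $D_{A_{n_i}}(\sigma)$ converges for \emph{every} pattern $\sigma$; write $\mu([\sigma])$ for the limit. It remains to check that $\sigma\mapsto\mu([\sigma])$ is the cylinder-restriction of a $\Gamma$-invariant Borel probability measure on $2^\Gamma$.

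First I would verify finite additivity. Exactly as in Section~\ref{preliminaries:section}, the finite densities are \emph{exactly} additive over one-point extensions of the domain: given a pattern $\sigma$ and $\alpha\notin\fn{dom}(\sigma)$, every position $i$ at which $\sigma$ occurs in $A_n$ witnesses precisely one of the two extensions $\sigma_0,\sigma_1$ of $\sigma$ to $\fn{dom}(\sigma)\cup\{\alpha\}$ --- according to whether $\alpha i$ is defined and lies in $A_n$ --- so $N_{A_n}(\sigma)=N_{A_n}(\sigma_0)+N_{A_n}(\sigma_1)$, and the unique pattern with empty domain has density $1$. Passing to the limit, $\mu$ is a finitely additive, normalized premeasure on the algebra generated by the cylinders. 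Since $2^\Gamma$ is compact this algebra consists of clopen sets, on which finite additivity already forces countable additivity, so Carath\'eodory's theorem extends $\mu$ to a Borel probability measure with the prescribed values on cylinders.

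The crux is $\Gamma$-invariance, i.e.\ $\mu(T_\gamma^{-1}[\sigma])=\mu([\sigma])$ for every $\gamma$ and every pattern $\sigma$. Now $T_\gamma^{-1}[\sigma]$ is again a cylinder, say $[\gamma\cdot\sigma]$ for a suitable translate $\gamma\cdot\sigma$ of $\sigma$, so it suffices to show $|D_{A_n}(\gamma\cdot\sigma)-D_{A_n}(\sigma)|\to 0$ as $n\to\infty$. Fix $\gamma$ and $\sigma$ and take $n$ large enough that $\fn{dom}(\sigma)$, its $\gamma$-translate, $\gamma$, and $\gamma^{-1}$ all lie in $F_n$. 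On $\fn{dom}(F_n)$ --- which by hypothesis exhausts $X_n$ up to a vanishing proportion --- the composition axiom lets one manipulate products freely, and a routine unwinding of the definitions shows that for $i$ in a large subset of $X_n$, ``$\gamma\cdot\sigma$ occurs at $i$ in $A_n$'' is equivalent to ``$\sigma$ occurs at $\gamma i$ in $A_n$'' (possibly with $\gamma^{-1}$ in place of $\gamma$, depending on conventions; since we need invariance under all of $\Gamma$ this is immaterial). In other words the partial map $i\mapsto\gamma i$ on $X_n$ carries occurrences of $\gamma\cdot\sigma$ bijectively onto occurrences of $\sigma$. It is therefore enough to know that $i\mapsto\gamma i$ is an \emph{approximate automorphism} of $X_n$ with its uniform measure: defined, injective, and surjective off a set of vanishing relative size. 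Granting this, $|\{i : \gamma\cdot\sigma\text{ occurs at }i\}|$ and $|\{j:\sigma\text{ occurs at }j\}|$ differ by $o(|X_n|)$, the two densities differ by $o(1)$, and the limits along the subsequence coincide.

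I expect the real work --- and the only genuinely delicate point --- to lie in that last paragraph: making ``for $i$ in a large subset of $X_n$'' and ``approximate automorphism'' quantitative. The domain hypothesis $|\fn{dom}(F_n)|/|X_n|\to 1$ controls where $\gamma$ and $\gamma^{-1}$ are \emph{defined}, but not directly where $i\mapsto\gamma i$ is injective or where its image is large; to get those one combines the composition axiom (which forces $\gamma^{-1}$ to invert $\gamma$ on the overlap of the relevant domains, hence forces injectivity there) with the F{\o}lner property of $(F_n)$, which is precisely what makes the exceptional ``boundary'' region --- the positions carried out of, or not reached from, $\fn{dom}(F_n)$ by $\gamma$ --- negligible relative to $|X_n|$. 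The diagonalization, the additivity, and the Carath\'eodory extension all parallel Section~\ref{preliminaries:section} and should be routine.
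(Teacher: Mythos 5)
Your proposal follows the same route as the paper's own proof: extract a diagonal subsequence along which all pattern densities converge, define $\mu$ on cylinders as the limiting density, verify additivity and $\Gamma$-invariance using the hypothesis $|\fn{dom}(F_n)|/|X_n|\to 1$ together with the fact that the nested F{\o}lner sequence eventually contains any given finite subset of $\Gamma$, and then extend by Carath\'eodory. You actually supply more detail on the invariance step than the paper does (the paper dispatches it in one sentence), and your identification of the partial map $i\mapsto\gamma i$ as an approximate bijection carrying occurrences of the translated pattern onto occurrences of $\sigma$ is exactly the intended mechanism.
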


Taking $\Gamma = \ZZ$ and $F_n = \{ -(n-1), \ldots, n-1 \}$ for each $n$ yields a version of Theorem~\ref{furstenberg:correspondence:thm} with $\ZZ$ in place of $\NN$. In the formulation in \cite[Section 4]{bergelson:00}, for example, the sets $X_n$ are taken to be subsets of $\Gamma$ itself.

\begin{proof}
  As in the proof of Theorem~\ref{furstenberg:correspondence:thm}, we can iteratively thin the sequence $(A_n)$ and diagonalize so that the limit in question exists for each $\sigma$, and then define $\mu([\sigma])$ accordingly. We only need to show that $\mu$ is additive and $\Gamma$ invariant, at which point we can apply the Caratheodory extension theorem.

  To see that $\mu$ is additive, let $\sigma$ be any pattern, $\alpha$ an element of $\Gamma$ that is not in the domain of $\sigma$, and let $\sigma_0$ and $\sigma_1$ be the patterns extending $\sigma$ with value $0$ and $1$, respectively, at $\gamma$. It suffices to show that $\mu([\sigma]) = \mu([\sigma_0]) + \mu([\sigma_1])$. But since $(F_n)$ is a F{\o}lner sequence, $\fn{dom}(\sigma) \cup \{ \gamma \} \subseteq F_n$ for sufficiently large $n$, and the desired conclusion follows from the fact that $\lim_n |\fn{dom}(F_n)| / | X_n | = 1$. Similarly, $\Gamma$-invariance also follows from the fact that $(F_n)$ is a F{\o}lner sequence with this last property.
\end{proof}

We have the following inverse:

\begin{theorem}
\label{amenable:inverse:thm}
Let $\Gamma$ be a countable discrete amenable group with F{\o}lner sequence $(F_n)$, and let $\mu$ be any $\Gamma$-invariant measure on $2^\Gamma$. Then for each $j$ and $\varepsilon > 0$, there exist an $n$, a finite set $X$, a partial action of $F_n$ on $X$, and $A \subseteq X$ such that for every $\sigma$ with domain $F_j$, $|\mu([\sigma])-D_A(\sigma)| < \varepsilon$.
\end{theorem}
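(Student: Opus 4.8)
The plan is to mimic the proof of Theorem~\ref{inverse:thm}, replacing cyclic shifts by the F{\o}lner sets of $\Gamma$ and the operation of "concatenating copies of $\tau$'s" by a suitable disjoint-union-of-tiles construction. First I would fix $j$ and $\varepsilon$, choose a F{\o}lner set $F_k$ that is large enough to be nearly invariant under left translation by $F_j$ (so that $|F_j F_k \symdiff F_k| < \delta |F_k|$ for a small $\delta$ depending on $\varepsilon$), and note that $\{[\tau] : \tau \text{ a pattern with domain } F_k\}$ partitions $2^\Gamma$. The key averaging identity is the exact analogue of the one in the proof of Theorem~\ref{inverse:thm}: by $\Gamma$-invariance of $\mu$,
\[
  \mu([\sigma]) = \frac{1}{|F_k|} \sum_{\gamma \in F_k} \mu(T_\gamma^{-1}[\sigma])
    = \sum_\tau \frac{1}{|F_k|} \sum_{\gamma \in F_k} \mu(T_\gamma^{-1}[\sigma] \cap [\tau]),
\]
and, for each $\tau$ with domain $F_k$, the inner sum counts (up to $\pm \delta |F_k|$ terms coming from $\gamma \in F_k$ near the boundary, i.e. with $\gamma^{-1}\fn{dom}(\sigma) \not\subseteq F_k$) exactly the number of $\gamma$ at which $\sigma$ occurs in $\tau$ viewed as a subset of $F_k$ under the left-multiplication partial action. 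Thus $\mu([\sigma]) = \sum_\tau D_\tau(\sigma)\mu([\tau]) + O(\delta)$, where $D_\tau$ is taken with respect to the natural partial action of $F_j$ on $F_k$.

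Next I would build the finite model. Take $X$ to be the disjoint union of $a_\tau$ copies of $F_k$, one block for each pattern $\tau$ on $F_k$, where $a_\tau/l$ is a good rational approximation to $\mu([\tau])$ as in the proof of Theorem~\ref{inverse:thm} (so $\sum_\tau a_\tau = l$ and $|X| = l|F_k|$). On each copy of $F_k$, let $F_n$ (for a suitable $n \geq k$, e.g.\ $n = k$ works if $F_k$ already supports enough of the needed translations, otherwise take $n$ large enough that $F_j F_k \subseteq F_n$) act by the partial left-multiplication action inherited from $F_k$: for $i$ in the copy of $F_k$ and $\alpha \in F_n$, declare $\alpha i$ defined iff $\alpha i \in F_k$ (same copy), with the evident composition law; elements near the boundary of each block thus fall outside $\fn{dom}(F_n)$. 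One checks $|\fn{dom}(F_n)|/|X| \to 1$ is not needed here since $n$ is fixed, but we do need $|\fn{dom}(F_j)|/|X| = 1 - O(\delta)$, which holds by the near-invariance of $F_k$. Let $A \subseteq X$ be the union, over $\tau$, of the copies of $\tau$ (as a subset of the corresponding copy of $F_k$) placed in its $a_\tau$ blocks. Then patterns $\sigma$ with domain $F_j$ occur in $A$, block by block, exactly as they occur in the corresponding $\tau$'s, except for the $O(\delta |F_k|)$ boundary positions per block; summing and dividing by $|X| = l|F_k|$ gives
\[
  D_A(\sigma) = \sum_\tau \frac{a_\tau}{l} D_\tau(\sigma) + O(\delta)
    = \sum_\tau \mu([\tau]) D_\tau(\sigma) + O\!\left(\frac{2^{|F_k|}}{l}\right) + O(\delta)
    = \mu([\sigma]) + O\!\left(\frac{2^{|F_k|}}{l}\right) + O(\delta).
\]
Choosing $F_k$ (hence $\delta$) small enough and then $l$ large enough makes both errors less than $\varepsilon/2$.

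The main obstacle, I expect, is bookkeeping around the partial action: in the cyclic case "wraparound" made the action total on all of $\nn$, whereas here each block is a genuine F{\o}lner set with a real boundary, so one must be careful that (i) the partial action axioms $1i = i$ and $\gamma(\gamma' i) = (\gamma\gamma')i$ hold on the nose where all the relevant products land in $F_k$, which is automatic because the action is just restricted left multiplication, and (ii) the count of "bad" positions — those $i$ for which some $\alpha \in F_j$ has $\alpha i \notin F_k$, so that $\sigma$ may appear to occur or not occur spuriously — is genuinely $O(\delta|F_k|)$ per block. Point (ii) is exactly where the F{\o}lner property enters: $\{i \in F_k : F_j i \not\subseteq F_k\}$ has size at most $|F_j^{-1}F_k \symdiff F_k| \leq |F_j^{-1}F_k \setminus F_k| + |F_k \setminus F_j^{-1} F_k|$, which is $O(\delta|F_k|)$ once $F_k$ is chosen $(F_j^{-1}, \delta)$-invariant. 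Everything else is a direct transcription of the $\ZZ$-argument, and the final choice of parameters is identical in spirit (though one no longer gets a clean closed-form bound on $|X|$ since that now depends on the growth of the F{\o}lner sequence of $\Gamma$).
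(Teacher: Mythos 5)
Your proposal follows essentially the same route as the paper's proof: the same averaging identity over $F_k$ using $\Gamma$-invariance, the same reduction of $\mu([\sigma])$ to $\sum_\tau D_\tau(\sigma)\mu([\tau])$ up to a F{\o}lner boundary error, and the same construction of $X$ as a disjoint union of copies of $F_k$ carrying the partial left-multiplication action, with $A$ assembled from copies of the $\tau$'s in proportions approximating $\mu([\tau])$. The only difference is that you spell out the final error bookkeeping in more detail than the paper (which simply defers to the argument of the $\ZZ$ case), and your bound on the number of boundary positions is stated slightly differently from the paper's $|F_j|\cdot|F_j F_k \symdiff F_k|$, but both are controlled by the same F{\o}lner-invariance choice of $F_k$.
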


\begin{proof}
 The proof is similar to that of Theorem~\ref{inverse:thm}, \emph{mutatis mutandis}. Given $j$, we can choose $k$ large enough to make $|F_j| \cdot | F_j F_k \symdiff F_k | / | F_k |$ arbitrarily small. Then for any $\sigma$ with domain $F_j$, if we let $\tau$ range over patterns with domain $F_k$, we have
\begin{align*}
 \mu([\sigma]) & = \frac{1}{|F_k|} \sum_{i \in F_k} \mu(T_i^{-1}[\sigma]) \\
   & = \frac{1}{|F_k|} \sum_{i \in F_k} \sum_\tau \mu(T_i^{-1}[\sigma] \cap [\tau]) \\
   & = \sum_\tau \frac{1}{|F_k|} \sum_{i \in F_k} \mu(T_i^{-1}[\sigma] \cap [\tau])
\end{align*}
Let $F_j$ as act partially on $F_k$ by left multiplication, and view $\tau$ as representing a subset of $F_k$. Then as long as $i$ is in the domain of $F_j$, $T_i^{-1}[\sigma] \cap [\tau]$ is equal either to $[\tau]$ or the empty set, depending on whether $\sigma$ occurs at position $i$ in $\tau$. But $i$ fails to be in the domain of $F_j$ only when $\gamma i \not\in F_k$ for some $\gamma$ in $F_j$, and so the set of $i$ that are not in the domain of $F_j$ has cardinality at most $|F_j| \cdot | F_j F_k \symdiff F_k |$. Thus we can continue the calculation above,
\begin{align*}
\ldots & = \sum_\tau \frac{1}{|F_k|} (N_\sigma(\tau) + O(|F_j||F_j F_k \symdiff F_k|)) \mu([\tau]) \\
   & = \sum_\tau D_\sigma(\tau) \mu([\tau]) + O(|F_j|\cdot |F_j F_k \symdiff F_k|)/|F_k|).
\end{align*}
Now proceed as in the proof of Theorem~\ref{inverse:thm}. Let $X$ be a disjoint union of copies of $F_k$, where $F_j$ acts on $F_k$ by left multiplication, insofar as the results of the multiplication land in $F_k$. Let $A$ be a disjoint union of copies of the various $\tau$'s living on the various $F_k$'s, where the fraction of occurrences of a given $\tau$ approximates $\mu([\tau])$, and $|F_j|\cdot |F_j F_k \symdiff F_k|)/|F_k|$ is sufficiently small to preserve the quality of the approximation.
\end{proof}

As in the proof of Corollary~\ref{inverse:cor}, the universality of $2^\Gamma$ means that the result can be pulled back to arbitrary $\Gamma$-invariant spaces.

\begin{corollary}
 \label{amenable:inverse:cor}
Let $\Gamma$ be a countable discrete amenable group with F{\o}lner sequence $(F_n)$. Let $\mdl X = (X, \mdl C, \nu, \Gamma)$ be a measure-preserving system, and let $E$ be any $\mdl C$-measurable set. Then for each $j$ and $\varepsilon > 0$, there exists a finite set $X$, a partial action of $F_j$ on $X$, and $A \subseteq X$ such that for every pattern $\sigma$ with domain $F_j$, 
\[
\left|\nu\left(\bigcap_{i \in F_j} (T_i^{-1}E)^{\sigma_i}\right) - D_A(\sigma)\right| < \varepsilon.
\]
\end{corollary}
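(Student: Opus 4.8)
The plan is to mirror the derivation of Corollary~\ref{inverse:cor} from Theorem~\ref{inverse:thm}, with $2^\Gamma$ in place of $2^\NN$ and Theorem~\ref{amenable:inverse:thm} in place of Theorem~\ref{inverse:thm}: since $2^\Gamma$ is universal for $\Gamma$-measure-preserving systems with a distinguished set, the inequality we want is just the inequality of Theorem~\ref{amenable:inverse:thm} read through the appropriate symbolic factor map.

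First I would build that factor map. Given $\mdl X = (X, \mdl C, \nu, \Gamma)$ and the measurable set $E$, define $\ph \colon X \to 2^\Gamma$ by setting $(\ph(x))(\gamma) = 1$ if $T_\gamma x \in E$ and $(\ph(x))(\gamma) = 0$ otherwise. With the action conventions already in force in this section --- $(T_\gamma \omega)(\alpha) = \omega(\gamma\alpha)$ on $2^\Gamma$, together with the matching convention on $X$ --- a direct computation gives $\ph(T_\gamma x) = T_\gamma \ph(x)$ for all $\gamma$ and $x$. Each preimage $\ph^{-1}([\sigma])$ is a finite Boolean combination of sets of the form $T_i^{-1}E$ with $i \in \fn{dom}(\sigma)$, and so lies in $\mdl C$; hence $\ph$ is measurable, the push-forward $\mu(B) := \nu(\ph^{-1}(B))$ is a well-defined Borel measure on $2^\Gamma$, and $\mu$ is $\Gamma$-invariant because $\ph$ is equivariant and each $T_\gamma$ preserves $\nu$ --- exactly as in the discussion preceding Corollary~\ref{inverse:cor}, but with $\Gamma$ in place of $\ZZ$.

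Next I would evaluate $\mu$ on cylinders and invoke Theorem~\ref{amenable:inverse:thm}. Unwinding the definition of $\ph$, for a pattern $\sigma$ with finite domain $D$ one has $x \in \ph^{-1}([\sigma])$ if and only if $1_E(T_i x) = \sigma(i)$ for every $i \in D$, which is precisely the assertion that $x \in \bigcap_{i \in D} (T_i^{-1}E)^{\sigma_i}$; hence
\[
 \mu([\sigma]) = \nu\Bigl(\bigcap_{i \in D}(T_i^{-1}E)^{\sigma_i}\Bigr),
\]
so for patterns with domain $F_j$ the quantity $\mu([\sigma])$ is exactly the left-hand term in the corollary. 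Applying Theorem~\ref{amenable:inverse:thm} to the $\Gamma$-invariant measure $\mu$ with the given $j$ and $\varepsilon$ now yields a finite set $X$, a partial action of $F_j$ on $X$ --- restricting the action supplied by the theorem to $F_j$ if need be, which changes none of the relevant densities since occurrence of a pattern of domain $F_j$ refers only to how elements of $F_j$ act --- and a subset $A \subseteq X$ with $|\mu([\sigma]) - D_A(\sigma)| < \varepsilon$ for every pattern $\sigma$ of domain $F_j$. Substituting the displayed identity for $\mu([\sigma])$ into this inequality completes the argument.

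I do not expect a serious obstacle here: the mathematical content is entirely contained in Theorem~\ref{amenable:inverse:thm}, and everything else is transfer along the universal factor $2^\Gamma$. The only points requiring care are bookkeeping: keeping the two group-action conventions aligned so that $\ph$ is genuinely equivariant --- equivalently, choosing $\gamma$ rather than $\gamma^{-1}$ in the definition of $\ph$ so that $\ph^{-1}([\sigma])$ comes out as $\bigcap_i (T_i^{-1}E)^{\sigma_i}$ rather than the pattern with inverted indices --- and the mild clash whereby $X$ denotes both the underlying set of $\mdl X$ and the finite set produced in the conclusion.
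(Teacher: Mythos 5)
Your proposal is correct and follows exactly the route the paper intends: the paper dispatches this corollary with the single remark that, as for Corollary~\ref{inverse:cor}, the universality of $2^\Gamma$ lets one push $\nu$ forward along the symbolic factor map and apply Theorem~\ref{amenable:inverse:thm}. Your write-up simply fills in the details of that same argument, including the harmless reconciliation between the theorem's ``partial action of $F_n$'' and the corollary's ``partial action of $F_j$.''
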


\section{Final comments}
\label{questions:section}

The proof of Theorem~\ref{inverse:thm} relies on the fact that one can construct a point of $2^\NN$ by concatenating finite specifications of its orbit behavior. This is an instance of a more general property that some dynamical systems enjoy, known as the \emph{specification property} \cite{sigmund:74}. (A slightly stronger version is presented in \cite{katok:hasselblatt:96}.) Let $(X, T)$ be a dynamical system, where $X$ is a compact space and $T$ is a continuous map from $X$ to itself. Let $\mu$ be a $T$-invariant probability measure defined on the Borel subsets of $X$. A point $x$ in $X$ is said to be generic for $\mu$ if for every continuous function $f$ from $X$ to $\RR$, $\lim_{n \to \infty} \frac{1}{n} \sum_{i < n} f(T^i x) = \int f \; d\mu$. The results of Sigmund \cite{sigmund:74} show that if $(X, T)$ satisfies the specification property, then are generic points for any $T$-invariant measure, whether it is ergodic or not. The existence of generic points for any shift-invariant measure on $2^\NN$ is a special case of this result. 

The notions of computability discussed in Section~\ref{computable:section} can be extended to more general compact metric spaces; see, for example, \cite{galatolo:et:al:unp:b,galatolo:et:al:unp,hoyrup:rojas:09b,weihrauch:99}. In analogy to Theorem~\ref{generic:thm}, one might expect that systems satisfying a computable version of the specification property will always have computable generic points. But the methods of Sigmund~\cite{sigmund:74} do not seem to translate to the computable setting: the analogue to Theorem~\ref{inverse:thm} above is given by Lemma 1 of Sigmund~\cite{sigmund:70}, which relies on the pointwise ergodic theorem in an essential way. In contrast, the proof of Theorem~\ref{inverse:thm} relies on particular features of $2^\NN$. It seems likely that Sigmund's result is noneffective, which is to say, there are computable dynamical systems satisfying a computable version of the specification property, but lacking any computable generic points. It would therefore be interesting to know the extent to which the methods used here can be generalized.

\end{document}